\documentclass[a4paper,12pt]{article}  
\usepackage[utf8]{inputenc}  
\usepackage[T2A]{fontenc}  
\usepackage[english]{babel}  
\usepackage[top=20mm, left=20mm, right=20mm, bottom=20mm]{geometry}  
\usepackage[pagebackref]{hyperref} 
\usepackage{sectsty}

\subsectionfont{\normalsize}

\usepackage{amsmath} 
\usepackage{amssymb} 
\usepackage{amsthm} 
\usepackage{amsfonts}
\usepackage{mathrsfs} 
\usepackage{euscript} 
\usepackage{bm, enumitem}

\usepackage{tikz}
    \usetikzlibrary{arrows.meta}
    \usetikzlibrary{calc}
    
    \tikzset{gdst/.style=
    {circle, draw=black!50, very thick, minimum height=1.2cm, inner sep=2pt, text centered, }, }

\usepackage{float, graphicx}
\graphicspath{{pictures/}}
\usepackage{color}

\DeclareMathSymbol{\sm}{\mathbin}{AMSa}{"39}

\newtheorem{thm}{\bf Theorem}

\newtheorem{lem}[thm]{\bf Lemma}

\newtheorem{dfn}{\bf Definition}
\theoremstyle{definition}

\newcommand{\beq}{\begin{equation}}
\newcommand{\eeq}{\end{equation}}

\title{On metric properties of self-affine polygonal dendrites\thanks{The study was carried out under the state contract of the Sobolev Institute of Mathematics (project FWNF-2026-0026).}}
\author{
    Andrei Tetenov$^\text{\dag}$\\ 
    \href{mailto:a.tetenov@gmail.com}    
         {\textsf{a.tetenov@gmail.com}}
    \and 
    Ivan Yudin$^\text{\dag}$\\ 
    \href{mailto:uivan566@gmail.com}
         {\textsf{uivan566@gmail.com}}
    \and 
    Kutlimuratov Dilmurat$^\text{\ddag}$\\ \href{mailto: dilmurat87@internet.ru}
         {\textsf{dilmurat87@internet.ru}}
}
\date{
$^\text{\dag}$Sobolev Institute of Mathematics, Novosibirsk, Russia\\
$^\text{\ddag}$Novosibirsk State University, Novosibirsk, Russia\\\;\\
\today}

\begin{document}

\maketitle
\noindent\rule{\textwidth}{1pt}
\begin{abstract}
 The main result of the paper is Theorem 3, which states that for any self-affine dendrite $K$, generated by a polygonal system, there are constants $C>0$ and $\lambda\in (0,1)$ such that for any $x,y\in K$, the Jordan arc $\gamma\subset K$ with endpoints $x,y$ satisfies inequality $\dfrac{{\rm diam}(\gamma)}{\|x-y\|^\lambda}\le C$.
\end{abstract}
\noindent\rule{\textwidth}{1pt}\\

 In 2017 \cite{STV} it was proved  that if a self-similar dendrite $K\subset {\mathbb R}^2$ is generated by a system of convex polygons, then $K$ is a continuum with $C$-bounded turning \cite{V}. That is, there is a constant $C>0$ such that for any $x,y\in K$, the Jordan arc $\gamma\subset K$ with endpoints $x,y$ satisfies the inequality $\dfrac{{\rm diam}(\gamma)}{\|x-y\|}\le C$.  This statement is not valid for self-affine dendrites in ${\mathbb R}^2$,  even if they are generated by a system of convex polygons.\\

\section{Preliminaries}
{\bf Self-similar sets.} Let $\mathcal{S}=\{S_1,\ldots, S_m\}$ be a system of contractive maps in $\mathbb{R}^n$.  
A  nonempty compact  $K$ that satisfies the equation $K=\bigcup\limits_{i=1}^m S_i(K)$ is called a self-similar set or the {\em attractor} of the system $\mathcal{S}$.
If the maps $S_i\in \mathcal{S}$ are contractive similarities (or contractive affine maps), then the attractor $K$ is a self-similar (resp. self-affine) set.\\

\noindent{\bf The system of copies of $K$.} The set $I=\{1,...,m\}$ is called the set of indices for the system $ \mathcal S$.
The maps $S_1,\ldots, S_m$ generate the semigroup $\mathcal G(\mathcal S)$ that consists of compositions $S_{i_1i_2...i_n}=S_{i_1}S_{i_2}...S_{i_n}$, labeled by {\em multiindices} ${\bf i}={i_1i_2...i_n}$.  The set of all multiindices of length $n$ is $I^n$ and $I^*=\bigcup\limits_{n=1}^\infty I^n$ is the set of all multiindices. We write ${\bf i}\sqsubset {\bf j}$, if ${\bf i}$  is an initial word in $ {\bf j}$. In that case $K_{\bf j}$ is a subcopy of $K_{\bf i}$.  If ${\bf i}\not\sqsubset {\bf j}$ and ${\bf j}\not\sqsubset {\bf i}$, then ${\bf i}$  and $ {\bf j}$ are called {\em incomparable}.\\
If ${\bf i}\in I^n$, the sets $K_{\bf i}=S_{\bf i}(K)$ are called {\em copies} of $K$ of order $n$.\\
Equation $T(B)=\bigcup\limits_{i=1}^m S_i(B)$ defines the {\em Hutchinson operator} $T$ of the system $\mathcal S$. By Hutchinson's theorem \cite{Hut81}, for any compact set $B$, the sequence $T^n(B)$ converges to the attractor $K$. In particular, if $B\subset T(B)$, then $B\subset K$.

\noindent{\bf The SIP and self-similar boundary.} The system $\mathcal S$  has the single-point  intersection property (SIP) if,  for any pair $K_i,K_j$ of the copies of the attractor $K$, the intersection $K_i\cap K_j$ is either empty or is a singleton $\{A_{ij}\}$.  The set of all intersection points $A_{ij}$ is denoted by $\mathcal C$.\\ 
The set of all  $x\in K$  such that for some ${\bf i}\in I^*$, $S_{\bf i}(x)\in \mathcal C$, is called the {\em self-similar boundary} $\partial K$ of the set $K$.\\
Note that if the system $\mathcal S$ has the SIP, then for any two incomparable
multiindices ${\bf i,j}\in I^* $
   the intersection $K_{\bf i}\cap K_{\bf j}$ is empty or is a singleton.

\noindent{\bf The intersection graph and dendrite criterion.} In the SIP holds, one can consider
{\em the bipartite intersection graph} $\widehat G=\widehat G(\mathcal{S})$ of the system $\mathcal{S}$. It is defined as a bipartite graph with parts $\{K_i:\; i\in I\}$ and $\mathcal C$, and a set of edges $\{(K_i,A):A\in \mathcal C\cap K_i\}$.\\

\noindent\includegraphics[width=\linewidth]{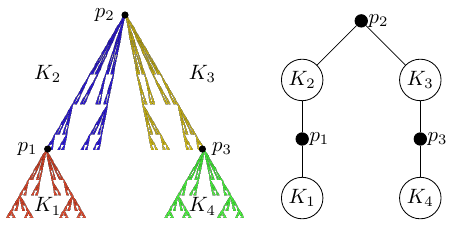}
{ \small {\bf Figure 1.} The attractor $K$ on the left has 4 copies. The bipartite intersection graph is shown on the right.}\\

\begin{dfn} 
A dendrite is a locally connected continuum that does not contain simple closed arcs.
\end{dfn}

The following criterion that was proved in  {\cite{FIP}} makes it possible to distinguish self-similar dendrites that have the single-point intersection property.

\begin{thm}\label{denfip}
Let $\mathcal{S}=\{S_1,\ldots, S_m\}$ be a system of injective contractions in $\mathbb{R}^n$.
If the attractor $K=K(\mathcal{S})$ has the single intersection property and 
 the bipartite intersection graph  $\widehat G(\mathcal{S})$ of the system $\mathcal{S}$ is a tree, then  $K$ is a dendrite.
\end{thm}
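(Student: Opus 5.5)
We plan to establish the three defining properties in turn: $K$ is a continuum, $K$ is locally connected, and $K$ contains no simple closed curve. The tool throughout is the hierarchy of copies $K_{\bf i}$ together with the bipartite intersection graph $\widehat G$.

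\emph{Connectedness and local connectedness.} Since $\widehat G$ is a tree, it is connected. So for any $i,j\in I$ there is a chain $K_i=K_{i_0},K_{i_1},\dots,K_{i_p}=K_j$ in which consecutive copies meet. By Hutchinson's criterion (a self-similar set is connected iff the graph of intersecting first-order copies is connected), $K$ is a continuum. The argument runs as follows. Take $B$ to be the convex hull of $K$ and apply $T$. Connectedness of the intersection graph passes to every $T^n(B)$, so each is connected. Their Hausdorff limit $K$ is then connected. For local connectedness, we use that $K$ is a finite union of the connected sets $K_{\bf i}$, ${\bf i}\in I^n$, whose diameters tend to $0$ uniformly as $n\to\infty$. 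A continuum with this property (Property S) is locally connected. Concretely, for $x\in K$ the union of the $K_{\bf i}$, ${\bf i}\in I^n$, that contain $x$ is a connected neighbourhood of $x$ of small diameter. It is a neighbourhood because the copies not containing $x$ form a closed set missing $x$.

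\emph{No simple closed curves.} This is the main step. Suppose $\gamma\subset K$ is a simple closed curve. Choose the longest multiindex ${\bf i}$ with $\gamma\subset K_{\bf i}$; such an ${\bf i}$ exists because $\mathrm{diam}\,K_{\bf i}\to0$. Applying $S_{\bf i}^{-1}$, which is legitimate since the maps are injective, we may assume $\gamma$ lies in no first-order copy. Then $\gamma$ meets at least two of the $K_i$.

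The key step is to show that $\gamma$ induces a cycle in $\widehat G$. Traverse $\gamma$ and record the successive copies $K_{i}$ through which it passes. By the SIP, passage from $K_i$ to $K_j$ can occur only through the single point $A_{ij}\in\mathcal C$. Because $\widehat G$ is a tree, the path in $\widehat G$ corresponding to one full circuit of $\gamma$ must backtrack. Backtracking means that at some stage $\gamma$ leaves a copy $K_j$ through a point $A$ and later re-enters $K_j$ through the same point $A$. We want the following claim: the part of $\gamma$ lying outside $K_j$ is a union of open arcs whose endpoints lie in $K_j\cap(\bigcup_{k\ne j}K_k)$. For each such arc, removing the vertex $K_j$ from the tree $\widehat G$ shows that both endpoints must be the same cut point $A$, since the arc lies in a single component of the union of the other copies. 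That forces a nondegenerate subarc of a simple closed curve to have equal endpoints, which is a contradiction unless $\gamma\setminus K_j$ is empty. Hence $\gamma\subset K_j$, contradicting the choice of ${\bf i}$.

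\emph{Main obstacle.} The hardest part is the claim in the previous step. We must check that $\gamma\setminus K_j$ decomposes correctly, and that a connected subset of $\bigcup_{k\ne j}K_k$ whose closure meets $K_j$ can meet it only at one point $A$. To prove this, we would use the tree structure as follows. The components of $\widehat G$ minus the vertex $K_j$ correspond to unions of copies that are pairwise disjoint and that each meet $K_j$ in a single point. This disjointness relies on the tree property together with the SIP: two such unions meeting would create a cycle in $\widehat G$.
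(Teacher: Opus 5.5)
The paper gives no proof of this theorem. It is quoted from \cite{FIP} and used as a black box in the proof of Theorem~\ref{main1}, so there is no in-paper argument to compare your route with. Judged on its own, your plan is sound, apart from one step in the simple-closed-curve argument that fails as written (second paragraph). The chain argument for connectedness is fine; this is Hata's criterion \cite{Hata85}, not Hutchinson's. The Property~S argument for local connectedness is also fine. The separation lemma you single out as the main obstacle is true and is proved just as you suggest. For $A\in\mathcal C\cap K_j$, let $U_A$ be the union of the copies in the component of $\widehat G-K_j$ containing $A$. Since $\widehat G$ is a tree, each such component contains exactly one neighbour of $K_j$. Hence $U_A\cap K_j=\{A\}$, and $U_A\cap U_{A'}=\varnothing$ for $A\neq A'$, because a common point would lie in $\mathcal C$ and join two components. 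These finitely many disjoint compacta cover $\bigcup_{k\ne j}K_k$. So every connected subset of $\bigcup_{k\neq j}K_k\setminus K_j$ lies in one $U_A$, and its closure meets $K_j$ at most in $A$.

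The step that fails as written is ``contradiction unless $\gamma\setminus K_j$ is empty''. You never fix $j$. If $\gamma\cap K_j=\{A\}$, then $\gamma\setminus K_j=\gamma\setminus\{A\}$ is a single open arc whose closure is all of $\gamma$, and its two ends legitimately coincide. No contradiction arises in that case. Your backtracking picture (leave $K_j$ through $A$, re-enter through $A$) is compatible with $\gamma$ being simple in exactly this situation.

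The repair is one line. Since $\gamma$ is infinite and there are only $m$ first-level copies, choose $j$ with $\#(\gamma\cap K_j)\ge 2$. Then each component of $\gamma\setminus K_j$ is an open arc with two \emph{distinct} endpoints in $K_j$. The lemma forces both endpoints to equal the same $A$, which is impossible. Hence $\gamma\setminus K_j=\varnothing$, i.e.\ $\gamma\subset K_j$, contradicting the maximality of ${\bf i}$.

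Your informal traversal can also be made rigorous. $\mathcal C$ is finite and $\gamma$ is simple, so $\gamma$ meets $\mathcal C$ in finitely many distinct points, and between consecutive such points it stays in a single copy. After merging consecutive repeated copies, you get a nontrivial closed walk in $\widehat G$ with pairwise distinct point-vertices and no backtracking. A tree cannot contain such a walk.
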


\section{Polygonal systems}

Extending the idea of the paper \cite{STV} to the self-affine case, we introduce the construction that produces a wide variety of self-affine  dendrites possessing the SIP; we prove the  remarkable metric properties of such dendrites.

\begin{dfn}
  Let $P$ be a convex polygon in $\mathbb R^2$ with the set of vertices $V=\{A_1,...,A_n\}$. Let $\mathcal S=\{S_1, S_2, \ldots, S_m\}$ be a system  of affine contractive maps  that  satisfy the following conditions: \\
  1) For any $i\in I$, $S_i(P)\subset P$; \\
  2) $\bigcup\limits_{i \in I } S_i(V) \supset V$; \\
  3) For any $i\neq j$, $S_i(P)\bigcap S_j(P)=S_i(V)\bigcap S_j(V)$ and $\#S_i(V)\bigcap S_j(V)\le1$;\\
  4)  The set $\bigcup\limits_{i\in I}  S_i(P) $ is simply connected.\\ 
  Then the system $\mathcal S$ is called a {\em simply connected self-affine $P$-polygonal system}.
\end{dfn}

Further in this article,  we will use the short term "polygonal system" keeping the longer name in mind.

{\centering\includegraphics[width=.6\linewidth]{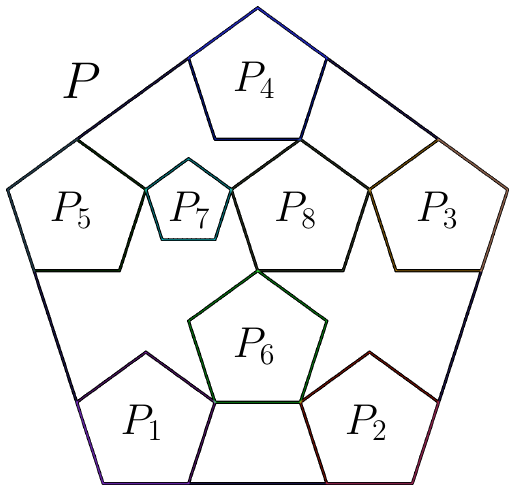}\\
{ \small {\bf Figure 2.} A polygonal system with 8 copies.The union of small polygons $P_1-P_8$ is simply connected.}}\\

For a polygonal system, we denote $P_{\bf i}=S_{\bf i}(P)$, $V_{\bf i}=S_{\bf i}(V)$.
By definition, $P\supset K$, therefore, $P_{\bf i}\subset K_{\bf i}$. Condition 2) means that $T(V)\supset V$, and consequently, $V\subset K$.
Then,  
$P_{\bf i}\subset P_{\bf j}$ iff $K_{\bf i}\subset K_{\bf j}$, and if ${\bf i}$ and ${\bf j}$ are incomparable, $K_{\bf i}\cap K_{\bf j}=P_{\bf i}\cap P_{\bf j}=V_{\bf i}\cap V_{\bf j}$.

For simply connected polygonal systems, the following Theorem is true.

\begin{thm}\label{main1}
Let $\mathcal S = \{S_1,  S_2,  \ldots S_m\}$ be a simply connected self-affine  $P$-polygonal system in $\mathbb R^2$. Then the attractor $K$ of the system $\mathcal S$ is a self-affine dendrite and its  boundary   $\partial K$ is contained in the set $V$.
\end{thm}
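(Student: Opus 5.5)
The plan is to verify the hypotheses of Theorem \ref{denfip}: the SIP, and that the bipartite intersection graph $\widehat G(\mathcal S)$ is a tree. The boundary statement will then come out of the same intersection analysis.

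\textbf{Step 1: SIP and the location of $\mathcal C$.} As noted after the definition, $K\subset P$, so $K_i\subset P_i$. Also $V\subset K$, since $T(V)\supset V$ gives $V\subset K$ by Hutchinson's theorem. Hence $V_i\subset K_i$. For $i\neq j$ condition 3) gives
\[
V_i\cap V_j\subset K_i\cap K_j\subset P_i\cap P_j=V_i\cap V_j .
\]
So $K_i\cap K_j=V_i\cap V_j$, which has at most one point. This proves the SIP and moreover shows $\mathcal C\subset\bigcup_i S_i(V)$. The same argument, applied inductively on word length, shows that for incomparable ${\bf i},{\bf j}$ we have $K_{\bf i}\cap K_{\bf j}=V_{\bf i}\cap V_{\bf j}$.

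\textbf{Step 2: $\widehat G$ is a tree.} Connectedness comes from condition 4): the union $\bigcup_i P_i$ is connected. Each $P_i$ is connected, and two of them meet only at the points $A_{ij}$. So the graph with vertices $P_i$ and edges given by nonempty intersections is connected, and therefore $\widehat G$ is connected.

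For acyclicity, suppose $\widehat G$ had a cycle $P_{i_1},A_1,P_{i_2},A_2,\dots,P_{i_k},A_k$ with $k\ge2$ and distinct entries. Join $A_{s-1}$ to $A_s$ by a segment inside the convex polygon $P_{i_s}$. This produces a closed polygonal curve $\Gamma$ in $\bigcup_i P_i$. I expect the following argument to be the main obstacle. I plan to choose the segments so that $\Gamma$ is a Jordan curve; this is possible because distinct $P_i$ meet only at single vertices and the $A_s$ are distinct. Because the polygons touch only at vertices, $\Gamma$ passes through each $A_s$ "crossing" from one polygon to another. Near $A_s$, the complement of $P_{i_s}\cup P_{i_{s+1}}$ contains points on both sides of $\Gamma$, since two convex polygons meeting at a single common vertex leave open angular sectors on either side. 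Take a point on the bounded side of $\Gamma$ not lying in $\bigcup_i P_i$. Then $\Gamma$ is not null-homotopic in $\bigcup_i P_i$, contradicting simple connectivity. To make the sector claim rigorous, I will look at a small disc around $A_s$. In it, the union of all $P_j$ containing $A_s$ is a finite union of closed sectors pairwise meeting only at $A_s$. The two arcs of $\Gamma$ at $A_s$ lie in different sectors, so each of the two complementary regions of the disc minus $\Gamma$ contains a gap between sectors.

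With the SIP and the tree property established, Theorem \ref{denfip} gives that $K$ is a dendrite.

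\textbf{Step 3: boundary.} By Step 1, $\mathcal C\subset\bigcup_i V_i$. Let $x\in\partial K$, so $S_{\bf i}(x)=A\in\mathcal C$ for some ${\bf i}$. I will show $x\in V$ by induction on $|{\bf i}|$ together with an address argument. Write ${\bf i}=i_1{\bf i}'$. Then $A\in K_{i_1}\cap K_j$ for some $j\ne i_1$, so $A\in V_{i_1}$, i.e.\ $S_{i_1}^{-1}(A)=S_{{\bf i}'}(x)\in V$. It remains to prove the claim that if $S_{\bf k}(x)\in V$ then $x\in V$.

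Suppose $y=S_k(z)\in V$ with $z\in K$. By condition 2), $y=S_l(v)$ for some $v\in V$. If $l\ne k$, then $y\in K_k\cap K_l=V_k\cap V_l$, so $z\in V$. If $l=k$, injectivity of $S_k$ gives $z=v\in V$. Iterating along ${\bf k}$ yields $x\in V$. One subtlety is that $V$ vertices are extreme points of $P$. Any point of $V$ lying in $P_l$ must then be a vertex of $P_l$, since $P_l\subset P$. Condition 2) already supplies this representation, so I expect no real difficulty here.
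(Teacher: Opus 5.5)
Your proof is correct. Steps 1 and 3 match the paper's argument, with more detail: the paper only asserts that $S_i(x)\in V$ forces $x\in V$ ``from condition 2)'', while you derive it from condition 2) together with $K_k\cap K_l=V_k\cap V_l$ (or, alternatively, from extremality of vertices).

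The genuine difference is in proving that $\widehat G(\mathcal S)$ is a tree.
\begin{itemize}
\item The paper picks interior points $O_i\in\dot P_i$ and joins each to the contact points $A_{ij}\in P_i$ by segments. It asserts that the resulting embedded graph $\Gamma$, which is isomorphic to $\widehat G(\mathcal S)$, is a deformation retract of $\bigcup_i P_i$. This uses gluing the retractions of the individual convex polygons, which is legitimate because they meet only at the fixed points $A_{ij}$. Simple connectivity then makes $\Gamma$ a tree, so connectedness and acyclicity come out together from homotopy invariance.
\item You separate the two properties. Connectedness comes from an elementary point-set argument. Acyclicity comes by contradiction: a cycle yields a polygonal Jordan curve through contact vertices, and the local sector picture at a contact point produces a point of its inner domain outside $\bigcup_i P_i$. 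The curve then has winding number $\pm 1$ about a point off the union, so it is not null-homotopic there.
\end{itemize}
Your route needs only the Jordan curve theorem and winding numbers, and it makes explicit where convexity and vertex-only contact enter. The paper's route is shorter and gives the stronger conclusion that the union is homotopy equivalent to the graph.

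When you write it up, state two small points explicitly. First, a cycle in $\widehat G$ has $k\ge 3$, since for $k=2$ two polygons would share two distinct points, contradicting condition 3). Second, the straight segments automatically form a Jordan curve, with no choice needed. Every contact point $P_{i_s}\cap P_{i_t}$ is a vertex of both polygons, and a segment joining two vertices of a convex polygon contains no third vertex.
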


{\bf Remark 1.} The idea of simply connected self-similar polygonal systems was introduced by the first author in \cite{STV}.  Theorem \ref{main1} was proved in \cite{STV} in the self-similar case. Here we give a more simple proof that applies to self-affine sets and is based on Theorem 1. 

 \begin{proof}
 
 1)  For any non-equal $i,j\in I$, $K_i\cap K_j=P_i\cap P_j=V_i\bigcap V_j$. If this intersection  is non-empty, it is a unique point which we denote by $A_{ij}$.
 For each convex polygon $P_i$ we take a point $O_i\in\dot P_i$ and connect  it with each of the points $A_{ij}\in P_i$ by a straight line segment $E_{ij}$. Thus, we obtain a bipartite graph $\Gamma$ with parts
 $\{O_i,i\in I\}$ and $\{A_{ij},i,j\in I\}$ and edges $E_{ij}$. This graph $\Gamma$ is isomorphic to the graph $\widehat G(\mathcal{S})$.

 For any polygon $P_i$, the  union of all segments $E_{ij}\subset P_i$ is a deformation retract of $P_i$.
 Therefore, the graph $\Gamma$ is a deformation retract of the set $\bigcup\limits_{i\in I}  S_i(P) $. 
 This implies that  the graph $\Gamma$ is a tree, and consequently, the graph  $\widehat G(\mathcal{S})$ is a tree.
 From  Theorem 1  it follows that $K$ is a dendrite.\\

 From Condition 2) it follows that
 if for some $x\in K$ and $i\in I$, $S_i(x)\in V$, then $x\in V$. Therefore, if for some $x\in K$ and ${\bf i}\in I$, $S_{\bf i}(x)\in V$, then $x\in V$.
 
 For any $A \in\mathcal{C}$, the point
 $A$ is the intersection of a finite number of polygons $P_{i_1},..,P_{i_k}$ and for any $j\notin\{{i_1},..,{i_k}\}$, $A\notin S_j(P)$. Consequently, if for some $x\in K$ and $i\in I$, $S_i(x)\in \mathcal{C}$, then $x\in V$. This shows that $\partial K\subset V$.
 \end{proof}

Since the attractor $K$ of the system $\mathcal S$ is a dendrite, for any points $x,y\in K$ there is a unique subarc $\gamma(x,y)\subset K$ with endpoints $x,y$.

In the self-similar case it was proved in \cite{STV} that the attractor $K$ of a simply-connected polygonal system $\mathcal{S}$ is a continuum with a $C$-bounded turning \cite{V}. It means that there is a constant $C$ such that for any two points $x,y\in K$, the arc $\gamma(x,y)\subset K$ satisfies the inequality ${\rm diam}\gamma(x,y)\le C\|x-y\|$. In particular, each arc $\gamma(x,y)\subset K$ is a quasi-arc. 

This is impossible for the attractors of self-affine polygonal systems.

For example, consider the images $S_1^n(\gamma(p_1,p_3))$ of the arc $\gamma(p_1,p_3)$ of a self-affine polygonal dendrite $K$ in Figure 1. 

The diameter of the nth image is
$D_n={\rm diam}(S_1^n(\gamma(p_1,p_3)))=\dfrac{2^{n+1}}{3^{n+1}}$, while the distance between its endpoints is $\Delta_n=\|S_1^n(p_1)-S_1^n(p_3)\|=\dfrac{2}{3^{n+1}}$. Thus, the ratio
$\dfrac{D_n}{\Delta_n}=2^n$ tends to infinity as n grows.

\section{The main Theorem}

An arc $\gamma$ is called a $t$-quasi-arc, $t > 1$, if there is a constant
$K > 0$ such that for every $x,y\in \gamma$ the subarc $\gamma(x,y)$ satisfies the inequality $ \left( {\rm diam}(\gamma(x,y))\right)^t\le K\|x-y\|$.

The main result of the paper is  that in the case of affine polygonal system $\mathcal{S}$   there is $\lambda\in (0,1)$ such that for any two points $x,y$ in its attractor $K$, they can be connected by a $1/\lambda$-quasiarc  \cite{N},\cite{Wen2003}.

It is essential that the polygon $P$ in the definition of the system is required to be convex,
because even in the self-similar case there is the example of a self-similar arc $K$ which is the attractor of a polygonal system with a non-convex polygon $P$ and which is not a $t$-quasiarc for any $t\ge 1$.\cite{Wen2003,ATK}.

\begin{thm}\label{main2}
Let $\mathcal S = \{S_1,  S_2,  \ldots S_m\}$ be a simply connected self-affine  $P$-polygonal system in $\mathbb R^2$ with the attractor $K$.

There are constants $C>0$ and $\lambda\in (0,1)$ such that for any $x,y\in K$ the subarc $\gamma(x,y)\subset K$ with endpoints $x,y$ satisfies the inequality
  $${\rm diam}(\gamma(x,y)) \le C|x-y|^{\lambda}.$$

\end{thm}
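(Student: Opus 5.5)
The plan is to reduce the estimate to a comparison between two scales. For $x\ne y$ in $K$ we locate the smallest copy $K_{\bf i}$ that contains the arc $\gamma(x,y)$. Then we bound ${\rm diam}(\gamma)$ from above by the largest contraction of $S_{\bf i}$, and $|x-y|$ from below by a power of its smallest contraction. Let $\rho_i=\|L_i\|<1$ be the operator norm of the linear part of $S_i$, and let $\sigma_i>0$ be its smallest singular value. Put $\rho=\max\rho_i$ and $\sigma=\min\sigma_i$. For $|{\bf i}|=n$ we then have ${\rm diam}K_{\bf i}\le \rho^n{\rm diam}P$, while $S_{\bf i}$ shrinks every distance by a factor at least $\sigma^n$. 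We will take $\lambda=\log\rho/\log\sigma\in(0,1]$, possibly decreased later to absorb constants, since $\rho^n=(\sigma^n)^\lambda$.

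First we fix the combinatorics of the arc. Let ${\bf i}$ be the longest multiindex with $x,y\in K_{\bf i}$. By Theorem \ref{main1}, $K_{\bf i}$ is a dendrite, so it is arcwise convex and $\gamma(x,y)\subset K_{\bf i}$. Hence ${\rm diam}\gamma\le\rho^{|{\bf i}|}{\rm diam}P$. Pulling back by $S_{\bf i}^{-1}$, we get points $x'=S_{\bf i}^{-1}x$ and $y'=S_{\bf i}^{-1}y$ that lie in different first-level copies $K_j\ne K_k$, and $|x-y|\ge\sigma^{|{\bf i}|}|x'-y'|$. So it suffices to show the following: for $x',y'$ in distinct first-level copies, $|x'-y'|$ is bounded below by a power of the scale on which they fail to separate. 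If $K_j\cap K_k=\emptyset$, their distance is a fixed positive constant $\delta$, and we are done with $C={\rm diam}P/\delta^\lambda$. By the tree structure of $\widehat G$, the arc $\gamma(x',y')$ passes through a chain of first-level copies and intersection points. The chain has bounded length, so we may reduce to the case where $x'\in K_j$, $y'\in K_k$, and $K_j\cap K_k=\{A\}$ with $A\in V_j\cap V_k$.

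The key geometric step, which I expect to be the main obstacle, is the neighbourhood of a common vertex $A$. Here convexity of $P$ and condition 3) are used. The polygons $P_j$ and $P_k$ are convex and meet only at $A$, so near $A$ they lie in two closed angular sectors with vertex $A$ that meet only at $A$. Consequently there is $c>0$ with $|x'-y'|\ge c\max(|x'-A|,|y'-A|)$ for $x'\in P_j$, $y'\in P_k$. This is exactly where a nonconvex $P$ would fail. It then remains to bound ${\rm diam}\gamma(x',A)$ in terms of $|x'-A|$ inside $K_j$.

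Write $A=S_j(a)$ with $a\in V$ by Theorem \ref{main1}. For a vertex $a$ and a point $z\in K$ near it, let $n$ be maximal such that $z$ and $a$ lie in a common $n$-th level copy $K_{\bf u}$ containing $a$ as a vertex. Such copies exist with ${\bf u}$ built by iterating the maps that fix vertices (condition 2) via the vertex map $V\to V$, after passing to a power to obtain fixed points). Then ${\rm diam}\gamma(z,a)\le{\rm diam}K_{\bf u}\le\rho^n{\rm diam}P$. At the next level $z$ leaves the corner copy, and since $P_{\bf u}$ is a convex polygon whose corner subpolygon at $a$ is $P_{{\bf u}w}$, the same sector argument applied inside $P_{\bf u}$ pulled back by $S_{\bf u}$ gives $|z-a|\ge c'\sigma^{n}$. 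Here $c'$ is the minimal distance from $a$ to the copies $P_l$ not containing $a$, together with an angle constant at $a$, and finitely many such data suffice. Combining the two bounds gives ${\rm diam}\gamma(z,a)\le C'|z-a|^\lambda$. Applying this twice and using the sector inequality yields ${\rm diam}\gamma(x',y')\le C''|x'-y'|^\lambda$. Rescaling by $S_{\bf i}$ via the reduction above finishes the proof, after lowering $\lambda$ slightly if needed so that $\lambda<1$ and the finite chain constants are absorbed. The delicate part is to make the sector constant $c$ uniform. Since affine maps preserve the positivity of angles, and we always pull back to first-level data, only finitely many configurations arise, which gives the uniformity.
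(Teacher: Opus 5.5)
Your proposal is correct and follows essentially the paper's route. You pull back by the smallest copy containing both points, dispose of disjoint first-level copies by a fixed separation constant, and in the adjacent case combine the angular separation of $P_j$ and $P_k$ at the common vertex $A$ with the minimal distance from a vertex to the first-level polygons not containing it, applied at the smallest subcopy containing the point and the vertex. The chain/fixed-point remarks and the extra angle factor in $c'$ are unnecessary but harmless. The one real difference is your uniform exponent $\lambda=\log\max_i\|L_i\|/\log\min_i\sigma_i$, which can be strictly smaller than the paper's $\lambda=\min_i\log Q_i/\log q_i$; the paper obtains $Q_{\bf i}\le q_{\bf i}^\lambda$ word by word via $Q_{ij}\le Q_iQ_j$ and $q_{ij}\ge q_iq_j$, but either choice suffices for the existence statement.
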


\begin{proof}
    
 For any ${\bf i}\in I^*$ define
 $$Q_{\bf i}=\max\left\{\dfrac{\|S_{\bf i}(x)-S_{\bf i}(y)\|}{x-y}, x\neq y, x,y\in P\right\}$$ and
 $$q_{\bf i}=\min\left\{\dfrac{\|S_{\bf i}(x)-S_{\bf i}(y)\|}{x-y}, x\neq y, x,y\in P\right\}$$
 $$\lambda=\min\limits_{i\in I}\dfrac{\log Q_i}{\log q_i}$$

 \begin{lem}
  For any multiindex  ${\bf i}=i_1...i_n\in I^\infty$,   
  $$Q_{\bf i}\le q_{\bf i}^\lambda$$    
 \end{lem}
\begin{proof}
 Note that all the mappings $S_i\in \mathcal{S}$ are contractions, thereby, all $Q_i$ and $q_i$ lie in $(0,1)$.
 
  Consequently, for any $i\in I$, $Q_i\le q_i ^\lambda$.

  For any two maps $S_i,S_j$ with respective coefficients $Q_i,Q_j,q_i,q_j$ it can be easily derived for their composition $S_{ij}=S_iS_j$ that
$Q_{ij}\le Q_i Q_j$ and
$q_{ij}\ge q_i q_j$.\\
 As a result, for any multiindex  ${\bf i}=i_1...i_n\in I^\infty$ we obtain that $$Q_{\bf i}\le Q_{i_1}Q_{i_2}...Q_{i_n}\le(q_{i_1}q_{i_2}...q_{i_n})^\lambda\le q_{\bf i}^\lambda$$ 
\end{proof}
 
 For convenience, we assume that $|P|=1$. In this case, $|P_i|\le Q_i\le q_i^\lambda$ for any $i\in I$ and $|P_{\bf i}|\le Q_{\bf i}\le q_{\bf i}^\lambda$ for any ${\bf i}\in I^*$.\\

\begin{lem} The  inequality
\begin{equation}\label{leC}
  {\rm diam}(\gamma(x,y))\le C\|x-y\|^\lambda   
 \end{equation}
  is preserved by all the elements 
  $S_{\bf i}$ of  the semigroup $\mathcal G(\mathcal S)$.  
\end{lem}
 \begin{proof}
  Take $x,y\in K$ and  let $\gamma(x,y)\subset K$  be the subarc with endpoints $x$ and $y$. Suppose that for some $C>0$,
  ${\rm diam}(\gamma(x,y))\le C\|x-y\|^\lambda$

 For any ${\bf i}\in I^*$, we have the inequalities
 $${\rm diam}(S_{\bf i}(\gamma(x,y)))\le Q_{\bf i}\  {\rm diam}(\gamma(x,y)) \mbox{,\ \  and}$$  
 $$\|S_{\bf i}(x)-S_{\bf i}(y)\|\ge q _{\bf i}\|x-y\|\mbox{, which imply }$$
  $${\rm diam}(S_{\bf i}(\gamma(x,y)))\le C\|S_{\bf i}(x)-S_{\bf i}(y)\|^\lambda.$$
     
 \end{proof}
 
To find the value of $C$,  the following values are required:\\

 1) $\rho=\min\{d_1,d_2\}$ where $d_1$ is a minimal distance between the points $x\in P_i$, $y\in P_j$, where $P_i\cap P_j=\varnothing$ and 
 $d_2$ is a minimal distance between the points $A\in V$ and $x\in P_i$, where $A\notin P_i$;\\
 
  2) $\beta$ - the minimum angle between the sides of  polygons  $P_i$ and $P_j$ that have a common endpoint at the vertex $A_{ij}$.\\

  Take some points $x',y'\in K$. Let $K_{\bf k}$ be the smallest copy of $K$ that contains $x'$ and $y'$.
  Then the points $x=S_{\bf k}^{-1}(x)$ and
  $y=S_{\bf k}^{-1}(y)$ are  contained in different copies, say $K_i$ and $K_j$, of the attractor $K$.
  
  There are 3 possibilities of relative positioning of $x\in K_i\subset P_i$ and $y\in K_j\subset P_j$: 

  1. $x$ and $y$ belong to non-intersecting polygons $P_i$ and $P_j$;

  2. $x=A\in V$, $A\notin P_j$, $y\in P_j$;

  3. $K_i\cap K_j=P_i\cap P_j=\{A_{ij}\}$, $x\in P_{i}$, $y\in P_{j}$.\\
  
 In the first case, 
 if for some $i,j\in\{1,2,...,m\}$, $ P_i\bigcap P_j=\varnothing$, then for any
  $x\in K_i, y\in K_j$, $\rho \le d(x,y)$. At the same time, ${\rm diam}(\gamma (x,y))\le |P|=1$, which implies the following inequality.
  $$\dfrac{{\rm diam}(\gamma(x,y))}{\|x-y\|^\lambda}\le\dfrac{1}{\rho ^\lambda}.$$
  
 In the second case, if $x\in K_i$, $A\in V $ and  $ A\not\in K_i$,
 then 
 ${\rm diam}(\gamma (x,A))\le |P|=1$ and $\|x-A\|\ge\rho$.  This implies the inequality 
 $$\dfrac{{\rm diam}(\gamma(x,A))}{\|x-A\|^\lambda}\le\dfrac{1}{\rho ^\lambda}.$$

 In the third case we have  the intersection $P_i\bigcap P_j=\{A_{ij}\}$. Take the points $x\in P_i$ and $ y\in P_j$. Let $P_{\bf i}$ 
 be the smallest polygon  that contains 
 $x$ and $A_{ij}$. There is at least one polygon $P_{{\bf i}k}$ that contains $x$. Similarly, let $P_{\bf j}$ 
 be the smallest polygon  that contains 
 $y$ and $A_{ij}$. There is at least one polygon $P_{{\bf j}l}$ that contains $y$. 
 Suppose for convenience that $q_{\bf i}\ge q_{\bf j}$
 
 Then $ d(x,A_{ij})\ge q_{\bf i}\rho $ and $d(y,A_{ij})\ge q_{\bf j}\rho $. Note that  the angle between the sides of the polygons  $P_{\bf i}$ and $P_{\bf j}$ having a common vertex $A_{ij}$ is at least $\beta$. Therefore, we get $ d(x,y)\ge q_{\bf i} \rho \sin\beta  $  and ${\rm diam}(\gamma(x,y))\le |P_{\bf i}|+|P_{\bf i}|\le 2 q_{\bf i}^\lambda$. Consequently,
  $$\dfrac{{\rm diam}(\gamma(x,y))}{\|x-y\|^\lambda}\le\dfrac{2q_{\bf i}^\lambda}{\rho^\lambda q_{\bf i}^\lambda(\sin\beta)^\lambda}=\dfrac{2}{\rho^\lambda(\sin\beta)^\lambda}.$$

This upper bound is valid for each of  cases 1,2,3.

If we apply a mapping $S_{\bf k}$ to 
$x$,$y$ and $\gamma(x,y)$, we obtain the relation
$$\dfrac{{\rm diam}(S_{\bf k}(\gamma(x,y))}{\|S_{\bf k}(x)-S_{\bf k}(y)\|^\lambda}\le \dfrac{Q_{\bf k}^n2q_{\bf k}^\lambda}{\rho^\lambda q_k^{(n+1)\lambda}(\sin(\beta))^\lambda}$$

Since $Q_{\bf k}^n\le q_k^{n\lambda}$, we finally obtain

$$\dfrac{{\rm diam}(\gamma(x',y'))}{\|x'-y'\|^\lambda}\le\dfrac{2}{\rho^\lambda (\sin(\beta))^\lambda}$$

Therefore, $C=\dfrac{2}{\rho^\lambda (\sin(\beta))^\lambda}$ is the desired constant.

\end{proof}

\vspace{0.15cm}

\noindent {\bf Acknowledgments} 
\vspace{0.15cm}
\noindent 
The work is supported by the Mathematical Center in Akademgorodok under the agreement No. 075-15-2025-348 with the Ministry of Science and Higher Education of the Russian Federation.


\begin{thebibliography}{99} 
\small

 \bibitem{AST}
Allabergenova,K., Samuel,M., Tetenov,A.(2024).
Intersections of the pieces of self-similar dendrites in the plane,
\textit{Chaos, Solitons and Fractals}182:114805
\url{https://doi.org/10.1016/j.chaos.2024.114805}


 \bibitem{ATK} 
     Aseev, V.V., Tetenov, A.V.,  Kravchenko, A.S. (2003). On Self-similar Jordan Curves on the Plane. \textit{Siberian Mathematical Journal} 44: 379--386 . \url{https://doi.org/10.1023/A:1023848327898}

\bibitem{Hata85} 
Hata, M. (1985).
On the structure of self-similar sets. 
\textit{Japan Journal of Applied Mathematics} 2(2): 381--414.
\url{https://doi.org/10.1007/bf03167083}

\bibitem{Hut81} 
Hutchinson, J. (1981).
Fractals and Self-Similarity. 
\textit{Indiana University Mathematics Journal} 30(5): 713--747.
\url{https://doi.org/10.1512/iumj.1981.30.30055}

\bibitem{N} Norton, A. (1989). Functions not constant on fractal quasi-arcs of critical points. \textit{Proceedings of the American Mathematical Society} 106(2): 397--405.
\url{https://doi.org/10.1007/BF02807200}
    
\bibitem{STV} {Samuel M., Tetenov A., Vaulin D.} (2017). Self-Similar Dendrites Generated by Polygonal Systems in the Plane.\textit{ Siberian Electronic Mathematical Reports}14:~737--751. \url{https://doi.org/10.17377/semi.2017.14.063}
   
\bibitem{FIP} 
Tetenov, A., Yudin, I., Kadirova, M. (2025).
Finiteness properties for self-similar continua.
\textit{Discrete and Continuous Dynamical Systems -- Series S.} 2025057: 1--13.
\url{https://doi.org/10.3934/dcdss.2025057}

\bibitem{V}
Väisälä, J.(1991) Bounded turning and quasiconformal maps. \textit{Monatshefte für Mathematik} 111: 233--244 \url{https://doi.org/10.1007/BF01294269}


\bibitem{Wen2003}
Wen, Zhi-Ying, and Li-Feng Xi. (2003). Relations among Whitney sets, self-similar arcs and quasi-arcs. \textit{Israel Journal of Mathematics} 136.1: 251--267.
\url{https://doi.org/10.1007/BF02807200}    



\end{thebibliography}
\end{document}